\newcommand\blfootnote[1]{%
  \begingroup
  \renewcommand\thefootnote{}\footnote{#1}%
  \addtocounter{footnote}{-1}%
  \endgroup
}
\newtheorem{theorem}{Theorem}
\newtheorem{lemma}[theorem]{Lemma}
\newtheorem{proposition}[theorem]{Proposition}
\newtheorem{remark}[theorem]{Remark}
\theoremstyle{definition}
\newtheorem{example}[theorem]{Example}
\newcommand{\FF}{\mathcal{F}}
\newcommand{\F}{\mathbb{F}}
\newcommand{\Q}{\mathbb{Q}}
\begin{document}
\title{Good Towers of Function Fields}

\author{Alp Bassa, Peter Beelen and Nhut Nguyen}
\date{}
\maketitle

\abstract{In this paper, we will give an overview of known and new techniques on how one can obtain explicit equations for candidates of good towers of function fields. The techniques are founded in modular theory (both the classical modular theory and the Drinfeld modular theory). In the classical modular setup, optimal towers can be obtained, while in the Drinfeld modular setup, good towers over any non-prime field may be found. We illustrate the theory with several examples, thus explaining some known towers as well as giving new examples of good explicitly defined towers of function fields.}

\blfootnote{Alp Bassa is supported by T\"{u}bitak Proj.~No.~112T233. Peter Beelen and Nhut Nguyen gratefully acknowledge the support from the Danish National Research Foundation and the National Science Foundation of China (Grant No.~11061130539) for the Danish-Chinese Center for Applications of Algebraic Geometry in Coding Theory and Cryptography.}

\blfootnote{Final version of this manuscript will appear in \emph{Algebraic Curves and Finite Fields: Codes, Cryptography, and other Emergent
Applications (H. Niederreiter, A. Ostafe, D. Panario, and A. Winterhof, eds.),
de Gruyter, Berlin}.}

\section{Introduction}

The question of how many rational points a curve of genus $g$ defined
over a finite field $\mathbb F_q$ can have, has been a central and
important one in number theory. One of the landmark results in the
theory of curves defined over finite fields was the theorem of Hasse
and Weil, which is the congruence function field analogue of the
Riemann hypothesis. As an immediate consequence of this theorem one
obtains an upper bound for the number of rational points on such a
curve in terms of its genus and the cardinality of the finite field.
It was noticed however by Ihara \cite{ihara} and Manin \cite{manin1}
that this bound can be improved for large genus and the asymptotic
study over a fixed finite field was then initiated by Ihara. An
asymptotic upper bound on the number of rational points was given by
Drinfeld and Vladut \cite{dv}. More precisely they showed that if $(C_i)_i$ is a family of curves all defined over $\F_q$ such that $C_i$ has genus $g_i$ (tending to infinity as $i$ tends to infinity) and $N_i$ rational points, then $$\limsup_{i\to \infty} \frac{N_i}{g_i} \le \sqrt{q}-1.$$

Finding curves of large genera with many points is a difficult task
and there have basically been three approaches: class field theory
(see among others \cite{xingnied,serre}), explicit constructions (see
among others \cite{elkies,GSINV,GSJNT,tame}) and reductions of modular
curves of various types (see among others
\cite{ihara,ge2,TVZ,manin}). With these techniques it is possible
to construct sequences of curves having many points compared to their
genera asymptotically and in some cases even attaining the Drinfeld--Vladut bound, in which case the sequence of curves is called optimal.

In \cite{GSJNT}, Garcia and Stichtenoth introduced the following
optimal sequence of function fields $(F_n)_{n \ge 0}$ over $\mathbb F_
\ell$, where $\ell=q^2$ : Let $F_0=\mathbb F_\ell(x_0)$  and define
$F_{n+1}=F_n(x_{n+1})$ where
$$x_{n+1}^q+x_{n+1}=\frac{x_n^q}{x_n^{q-1}+1},$$
for $n\geq 0$.
Because of its recursive behaviour, we say that the tower is recursive, satisfying the recursive equation
\begin{equation}\label{eq:jnteq}
y^q+y=\frac{x^q}{x^{q-1}+1}.
\end{equation}
In \cite{elkies,elkiesd}, Elkies gave a modular interpretation for
this and for all other known optimal recursive towers. More precisely
he showed that all known examples of tame, (respectively wild) optimal
recursive towers correspond to reductions of classical (respectively
Drinfeld) modular curves. Moreover, he found several other equations
for such towers, by studying  reductions of Drinfeld-, elliptic- and
Shimura-modular curves very explicitly and gave an explanation for the
recursive nature of these towers. Until now many explicitly known, recursively defined towers have a modular explanation.
As an example of this phenomenon, we give a modular interpretation for a good recursive tower given in \cite{loetter}.

Elkies showed that the reduction of the tower of Drinfeld modular curves $(X_0(T^n))_{n\geq
2}$ at the prime $T-1$ is a recursive tower satisfying the recursive equation
\begin{equation}\label{eq:elkieseq}
(y+1)^{q-1}\cdot y=\frac{x^q}{(x+1)^{q-1}}.
\end{equation}
This is an optimal tower, which was also studied in detail in
\cite{bezerragarcia}. It is a subtower of the tower defined by (\ref{eq:jnteq}).
In this paper we elaborate further on the ideas of Elkies. Note that the recursive equation in Equation \eqref{eq:elkieseq} has depth one. With this we mean that the variable $x_{n+1}$ in the $(n+1)$-th step of the tower is related to only the previous variables $x_n$ by the recursive equation.

We show how the defining equations for these modular towers can be read off
directly from the modular polynomial, and how this, in general,
leads to recursions of depth 2. More precisely, we show that the tower can be defined by recursive equations which relate in the $(n+1)$-th step of the tower (for $n\ge 1$), the variable $x_{n+1}$ to both $x_n$ and $x_{n-1}$. With this approach, finding explicit recursive towers turns out to be an easy task, once the corresponding modular polynomials are known. To illustrate this, we work out the equations
for a few cases of Drinfeld modular towers.

In the above Drinfeld modular theory was considered over the polynomial ring $\F_q[T]$. In the last section of the paper, we study a variation where this ring is replaced by the coordinate ring of an elliptic curve. We illustrate the ideas by going through a specific example in detail.

\section{The Drinfeld modular towers $(X_0(P^n))_{n\ge0}$}\label{section:two}

In this section we will restrict ourselves to the case of Drinfeld modular curves. However, the classical case of elliptic modular curves is analogous. Therefore we will on occasion state some observation for the classical case also. For more information on Drinfeld modules, the reader is referred to \cite{rosen,goss}. For more information on Drinfeld modular curves, see for example \cite{gekeler}. We denote by $\F$ the field $\F_q(T)$ and let $N \in \F_q[T]$ be a monic polynomial. The field $\F$ will play the role of constant field in the towers we find. From these, towers with a finite field as a constant field can be obtained by reducing the defining equations by a suitably chosen prime element $L$ of $\F_q[T].$ More precisely the constant field of such a reduced tower is $\F_L:=\F_q[T]/(L).$ To describe how to obtain (unreduced) towers, we will use the language of Drinfeld modules.

Let $\phi$ be a Drinfeld module of rank two with $j$-invariant $j_0$ and $\phi'$ be an $N$-isogenous Drinfeld module with $j$-invariant $j_1$. The Drinfeld modular polynomial $\Phi_N(X,Y)$ relates these $j$-invariants, more precisely it holds that $\Phi_N(j_0,j_1)=0$. Thinking of $j_0$ as a transcendental element, we can use this equation to define a so-called Drinfeld modular curve $X_0(N)$. If we want to emphasize the role of $N$, we will write $j_1=j_1(N)$. It should be noted that $j_0$ is independent of $N$, but it will be convenient to define $j_0(N):=j_0$. The function field $\F(X_0(N))$ of $X_0(N)$ is therefore given by $\F(j_0(N),j_1(N))$. Moreover, it is known, see \cite{bae}, that

\begin{equation}\label{eq:extdegree}
[\F(j_0(N),j_1(N)):\F(j_0(N))]=q^{\deg(N)}\prod_{\substack{P | N \\\ P \ \mathrm{ prime}}}\left(1+\frac{1}{q^{\deg(P)}}\right).
\end{equation}

In principle the work of finding an explicit description of the function field $\F(X_0(N))$ is done, once the modular polynomial $\Phi_N(X,Y)$ has been computed. However, for general $q$ the Drinfeld modular polynomial is not known explicitly. Even in the case $N=T$ it has only been determined recently \cite{bassabeelendrinfeld}. For a given $q$ it can be computed, but this is not always an easy task, since the coefficients of this polynomial tend to get very complicated as the degree of the polynomial $N$ increases. However, following Elkies's ideas (\cite{elkies,elkiesd}) from the modular polynomial $\Phi_P(X,Y)$ for a fixed polynomial $P$, the function fields of the Drinfeld modular curves $X_0(P^n)$ can be described easily in an explicit way. The reason for this is that for polynomials $P,Q \in \F_q[T]$ a $PQ$-isogeny can be written as the composite of a $P$-isogeny and a $Q$-isogeny, which implies that there is a natural projection from $X_0(PQ)$ to $X_0(P)$ or equivalently an inclusion of function fields $\F(X_0(P)) \subset \F(X_0(PQ))$. This implies that the function field $\F(X_0(P^n))$ also contains the function fields $\F(X_0(P^e))$, for any integer satisfying $1\le e \le n$, and hence $j_1(P^e) \in \F(X_0(P^n))$. Defining $j_e(P):=j_1(P^e)$ for $e \ge 1$, we see that $j_e(P) \in \F(X_0(P^n))$ for $1 \le e \le n$. Since $j_0$ is independent of $P$, we also have $j_0(P)=j_0(P^n) \in \F(X_0(P^n))$. Therefore the field $\F(X_0(P^n))$, is the composite of the fields $\F(j_e(P),j_{e+1}(P))$ for $e=0,\dots,n-1$. Since $P^{e+1}=PP^e$, any $P^{e+1}$-isogeny can be written as the composite of a $P$-isogeny and a $P^e$-isogeny. This means that $j_e(P)$ and $j_{e+1}(P)$ correspond to $P$-isogenous Drinfeld modules and hence we have $\Phi_P(j_e(P),j_{e+1}(P))=0$ for any $e$ between $0$ and $n-1$. We see that $\F(X_0(P^n))$ is the composite of $n$ fields isomorphic to $\F(X_0(P))=\F(j_0(P),j_1(P))$, the function field of $X_0(P)$. This observation led Elkies to construct a number of recursively defined {\emph{towers}} $(X_0(P^n))_{n\ge 2}$ of modular curves in \cite{elkies,elkiesd}. In \cite{elkies} several models defined over $\Q$ of classical modular curves are given, while in \cite{elkiesd} the reduction mod $T-1$ of the Drinfeld modular tower $X_0(T^n)_{n\ge 2}$ was described.

We consider the function field of $X_0(P^n)$. We have $$\F(X_0(P^n))=\F(j_0(P),j_1(P),\dots,j_{n-1}(P),j_n(P)).$$ So we can think of $\F(X_0(P^n))$ as iteratively obtained from $\F(j_0(P))$ by adjoining the elements $j_1(P),j_2(P),\dots,j_n(P)$, where $j_{e+1}(P)$ is a root of the polynomial $\Phi_P(j_{e}(P),t) \in \F(X_0(P^e))[t]$ for $0 \le e < n$. However, except for $j_1(P)$ these polynomials are not irreducible. In fact the extension $\F(X_0(P^2))/\F(X_0(P))$ has degree $q^{\deg P}$ by Equation (\ref{eq:extdegree}). This means that the polynomial $\Phi_P(j_1(P),t) \in \F(j_0(P),j_1(P))[t]$ has a factor $\Psi_P(j_0(P),j_1(P),t)$ of degree $q^{\deg P}$ such that $$\Psi_P(j_0(P),j_1(P),j_2(P))=0.$$ We can assume that $\Psi_P(j_0(P),j_1(P),t) \in \F[j_0(P),j_1(P)][t]$, by clearing denominators if necessary. Then  the trivariate polynomial $\Psi_P(X,Y,Z) \in \F[X,Y,Z]$  satisfies $$\Psi_P(j_{e-1}(P),j_e(P),j_{e+1}(P))=0$$ for all $0<e<n$. The function field $\F(X_0(P^n))$ can hence be generated recursively by the equations $\Phi_P(j_0(P),j_1(P))=0$ and $\Psi_P(j_{e-1}(P),j_e(P),j_{e+1}(P))=0$ for $0<e<n$. Note that the depth of the recursion is two in general, meaning that to obtain the minimal polynomial of $j_{e+1}(P)$ over $\F(j_0(P),\dots,j_{e}(P))$ for $e\ge 1$, we need both $j_e(P)$ and $j_{e-1}(P)$. We arrive at the following proposition.

\begin{proposition}\label{prop:depth two}
Let $P \in \F_q[T]$ be a polynomial and $n \ge 0$ an integer. The function field $G_n$ of the Drinfeld modular curve $X_0(P^n)$ is generated by elements $j_0,\dots,j_n$ satisfying:
$$\Phi_P(j_0,j_1)=0,$$ with $\Phi_P(X,Y)$ the Drinfeld modular polynomial corresponding to $P$ and
$$\Psi_P(j_{e-1},j_e,j_{e+1})=0, \makebox{ for } 1\le e <n,$$ with $\Psi_P(X,Y,Z)$ a suitable trivariate polynomial of $Z$-degree $q^{\deg P}$.
Consequently, the tower of function fields $\mathcal{G}:=(G_n)_{n \ge 0}$ can be recursively defined by a recursion of depth two in the following way:
$$G_0:=\F(j_0),$$ $$G_1:=\F(j_0,j_1), \makebox{ where } \Phi_P(j_0,j_1)=0$$ and for $n\ge 1$ $$G_{n+1}:=G_{n}(j_{n+1}) \makebox{ where } \Psi_P(j_{n-1},j_n,j_{n+1})=0.$$
\end{proposition}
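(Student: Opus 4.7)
The plan is to formalise the informal discussion preceding the statement. The generation claim $G_n=\F(j_0,\ldots,j_n)$ is the compositum description already established: since $P^n$-isogenies factor as chains of $P$-isogenies, $G_n$ is the compositum of the $n$ copies of $\F(X_0(P))=\F(j_e,j_{e+1})$ for $e=0,\ldots,n-1$, and each pair $(j_e,j_{e+1})$ satisfies $\Phi_P(j_e,j_{e+1})=0$. In particular $\Phi_P(j_0,j_1)=0$. So the whole content of the proposition beyond this recap is the construction of $\Psi_P$ and the verification that it cuts out the correct degree-$q^{\deg P}$ extension at each step of the tower.

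To construct $\Psi_P$, I would perform polynomial division in the $Z$-variable: write
$$\Phi_P(Y,Z) = (Z-X)\,\Psi_P(X,Y,Z) + \Phi_P(Y,X)$$
as an identity in $\F[X,Y,Z]$ (the remainder is obtained by setting $Z=X$). Here $\Psi_P\in\F[X,Y,Z]$ has $Z$-degree $\deg_Z\Phi_P-1=q^{\deg P}$, using that the $Y$-degree of $\Phi_P$ equals $q^{\deg P}+1$ by Equation~\eqref{eq:extdegree} applied to $N=P$ (and the symmetry of $\Phi_P$ in its two variables, which holds for Drinfeld modular polynomials since the dual of a $P$-isogeny is again a $P$-isogeny).

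Next I would verify that this single polynomial $\Psi_P$ works at every level. Substituting $X=j_{e-1},\,Y=j_e,\,Z=j_{e+1}$ into the displayed identity yields
$$\Phi_P(j_e,j_{e+1}) = (j_{e+1}-j_{e-1})\,\Psi_P(j_{e-1},j_e,j_{e+1}) + \Phi_P(j_e,j_{e-1}).$$
The left-hand side vanishes by definition, and the final term vanishes by the symmetry of $\Phi_P$ combined with $\Phi_P(j_{e-1},j_e)=0$. Hence
$$(j_{e+1}-j_{e-1})\,\Psi_P(j_{e-1},j_e,j_{e+1})=0,$$
and the desired relation follows provided $j_{e+1}\ne j_{e-1}$. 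This non-degeneracy is the main obstacle: it corresponds to excluding the possibility that the composite of two successive $P$-isogenies backtracks to the starting Drinfeld module. I would argue it by comparing degrees: if $j_{e+1}=j_{e-1}$ then $G_{e+1}=G_e$, contradicting the strict extension degree $[G_{e+1}:G_e]=q^{\deg P}$ obtained by forming the ratio of the formulas of Equation~\eqref{eq:extdegree} for $N=P^{e+1}$ and $N=P^e$.

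Finally, to conclude the recursive description of $\mathcal{G}$, I would observe that the $Z$-degree of $\Psi_P(j_{e-1},j_e,t)$ equals the extension degree $[G_{e+1}:G_e]=q^{\deg P}$ just computed, so $\Psi_P(j_{e-1},j_e,t)$ is, up to a unit, the minimal polynomial of $j_{e+1}$ over $G_e$. Hence $G_{e+1}=G_e(j_{e+1})$ is defined by $\Psi_P(j_{e-1},j_e,j_{e+1})=0$, which is the depth-two recursion asserted in the proposition.
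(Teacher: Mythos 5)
Your construction of $\Psi_P$ by a single polynomial division,
$$\Phi_P(Y,Z) = (Z-X)\,\Psi_P(X,Y,Z) + \Phi_P(Y,X),$$
works only when $P$ is prime, and the proposition is stated for an arbitrary polynomial $P \in \F_q[T]$. The key degree claim you invoke, namely that $\deg_Y \Phi_P = q^{\deg P}+1$, is read off from Equation~\eqref{eq:extdegree}, but that equation gives $\deg_Y \Phi_P = q^{\deg P}\prod_{Q\mid P}\bigl(1+q^{-\deg Q}\bigr)$, which equals $q^{\deg P}+1$ exactly when $P$ has a single prime factor of multiplicity one, i.e.\ when $P$ is prime. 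For composite $P$ (for instance $P=T^2+T$ with $q=2$, treated explicitly in the paper, where $\deg_Y \Phi_P = 9$ while $q^{\deg P}=4$), your quotient $\Psi_P$ has $Z$-degree $\deg_Y\Phi_P - 1 > q^{\deg P}$, so it cannot be the advertised trivariate polynomial of $Z$-degree $q^{\deg P}$. Worse, the recursion $G_{n+1}=G_n(j_{n+1})$ with $\Psi_P(j_{n-1},j_n,j_{n+1})=0$ only determines the correct field if $\Psi_P(j_{n-1},j_n,t)$ is (a constant multiple of) the minimal polynomial of $j_{n+1}$ over $G_n$; if $\Psi_P$ is reducible, the equation does not pin down the tower.

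The paper's own argument is more abstract precisely to avoid this: from the degree count $[G_{e+1}:G_e]=q^{\deg P}$ one knows the minimal polynomial of $j_2$ over $\F(j_0,j_1)$ has degree $q^{\deg P}$ and divides $\Phi_P(j_1,t)$; one then defines $\Psi_P$ to be that particular (in general proper) factor, cleared of denominators, and transports it along the isomorphisms $\F(j_{e-1},j_e,j_{e+1})\cong\F(j_0,j_1,j_2)$. Your division trick recovers exactly Remark~\ref{rem:casePprime}, where the paper notes that for prime $P$ the quotient by the linear factor already has the right degree and is therefore automatically the minimal polynomial. For composite $P$ you must further factor the quotient and select the degree-$q^{\deg P}$ piece, which is what the paper does in the $P=T^2+T$ example (and explicitly flags that Remark~\ref{rem:casePprime} does not apply there). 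The parts of your argument that are sound for all $P$ — the generation statement, the non-degeneracy $j_{e+1}\neq j_{e-1}$ via the extension-degree comparison, and the observation that a degree match identifies the minimal polynomial — would all still be usable once you replace the division by the correct ``take the degree-$q^{\deg P}$ factor'' step.
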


\begin{remark}\label{rem:casePprime}
The polynomial $\Psi_P(X,Y,Z)$ is easy to describe if $P$ is a prime. In that case $\deg_Y(\Phi_P(X,Y))=q^{\deg P}+1$. Since $\Phi_P(X,Y)$ is a symmetric polynomial, it holds that $$\Phi_P(j_1(P),j_0(P))=\Phi_P(j_0(P),j_1(P))=0.$$ Therefore, the polynomial $\Phi_P(j_1(P),t) \in \F(X_0(P))[t]$ has the factor $t-j_0(P)$. The factor $\Psi(j_0(P),j_1(P),t)$ can be obtained by dividing $\Phi_P(j_1(P),t)$ by $t-j_0(P)$. Note that in this case automatically $\deg_t \Psi_P(j_0(P),j_1(P),t)=q^{\deg P}$ and $$\Psi_P(j_0(P),j_1(P),j_2(P))=0,$$ as desired. A similar remark holds for the classical case: if $p$ is a prime number, then the classical modular polynomial $\Phi_p(X,Y)$ is a symmetric polynomial having degree $p+1$ in both $X$ and $Y$. The polynomial $\Phi_p(j_1(p),t) \in \Q(j_0(p),j_1(p))[t]$ has a factor of degree one in $t$ (namely $t-j_0(p)$) and a factor of degree $p$.
\end{remark}

By \cite{schweizer3} $X_0(P)$ is rational if and only if $P$ has degree one or two. In that case the tower $(\F(X_0(P^n)))_{n\ge 1}$ can be generated in a simpler way. Let $e\ge 1$ and let $u_{e-1}(P)$ be a generating element of $\F(j_{e-1}(P),j_e(P))$ over $\F$. Then $j_{e-1}(P)=\psi(u_{e-1}(P))$ and $j_e(P)=\phi(u_{e-1}(P))$ for certain rational functions $\psi(t)=\psi_0(t)/\psi_1(t)$ and $\phi(t)=\phi_0(t)/\phi_1(t)$. Here $\psi_0(t)$ and $\psi_1(t)$ (resp. $\phi_0(t)$ and $\psi_1(t)$) denote relatively prime polynomials. Since $\F(u_{e-1}(P))=\F(j_{e-1}(P),j_e(P))$, one can generate the function field of $X_0(P^n)$ for $n \ge 1$ by $u_0(P),\dots,u_{n-1}(P)$. These generating elements satisfy the equations $\psi(u_e(P))=\phi(u_{e-1}(P))$ with $1\le e <n$, since $\psi(u_e(P))=j_e(P)=\phi(u_{e-1}(P))$. Similarly as before, one can find generating relations of minimal degree by taking a factor $f_P(u_0(P),t)$ of $\psi_0(t)\phi_1(u_{0}(P))-\psi_1(t)\phi_0(u_{0}(P))$ of degree $q^{\deg P}$ such that $f(u_0(P),u_1(P))=0$. The function field $\F(X_0(P^n))$ with $n\ge 1$ can then recursively be defined by the equations $f(u_{e-1},u_e)=0$ for $1\le e <n$. We arrive at the following proposition.

\begin{proposition}\label{prop:depth one}
Let $P \in \F_q[T]$ be a polynomial of degree one or two and $n \ge 0$ an integer. There exists a bivariate polynomial $f_P(X,Y) \in \F[X,Y]$ of $Y$-degree $q^{\deg P}$ such that the function field $G_n$ of the Drinfeld modular curve $X_0(P^n)$ is generated by elements $u_0,\dots,u_{n-1}$ satisfying:
$$f_P(u_{e-1},u_e)=0, \makebox{ for } 1\le e < n.$$
Consequently, the tower of function fields $\mathcal{G}:=(G_n)_{n \ge 1}$ can be defined by a recursion of depth one:
$$G_1:=\F(u_0)$$ and for $n\ge 1$ $$G_{n+1}=G_{n}(u_{n+1}) \makebox{ where } f_P(u_n,u_{n+1})=0.$$
\end{proposition}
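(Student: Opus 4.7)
The plan is to follow the outline given in the paragraph preceding the proposition and make the choice of $f_P$, together with its uniformity across the tower, rigorous. Since $P$ has degree one or two, Schweizer's theorem gives that $X_0(P)$ is rational, so $\F(X_0(P)) = \F(j_0(P), j_1(P))$ equals $\F(u_0(P))$ for some generator $u_0(P)$ over $\F$. Writing $j_0(P) = \psi(u_0(P))$ and $j_1(P) = \phi(u_0(P))$ with $\psi = \psi_0/\psi_1$ and $\phi = \phi_0/\phi_1$ in lowest terms fixes the rational functions $\psi, \phi \in \F(t)$ once and for all.

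For $0 \le e \le n-1$, I would exploit the forgetful morphism $X_0(P^n) \to X_0(P)$ that on the modular side sends a chain $E_0 \to \cdots \to E_n$ of $P$-isogenies to the single isogeny $E_e \to E_{e+1}$. The induced inclusion $\F(X_0(P)) \hookrightarrow G_n$ has image $\F(j_e(P), j_{e+1}(P))$, and I let $u_e \in G_n$ denote the image of $u_0(P)$. By construction $\psi(u_e) = j_e$, $\phi(u_e) = j_{e+1}$, and $\F(u_e) = \F(j_e, j_{e+1})$. Combined with the identification $G_n = \F(j_0, \ldots, j_n)$ from Proposition \ref{prop:depth two}, this gives $G_n = \F(u_0, \ldots, u_{n-1})$.

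The recursion now comes from the identity $\phi(u_{e-1}) = j_e = \psi(u_e)$, which after clearing denominators becomes
$$F(u_{e-1}, u_e) := \phi_0(u_{e-1})\psi_1(u_e) - \phi_1(u_{e-1})\psi_0(u_e) = 0,$$
with $F \in \F[X, Y]$. Applying (\ref{eq:extdegree}) with $N = P^{e+1}$ and $N = P^e$ yields $[G_{e+1}:G_e] = q^{\deg P}$, and since $G_{e+1} = G_e(u_e)$ this is also the degree of the minimal polynomial of $u_e$ over $G_e$. I would then define $f_P(X, Y) \in \F[X, Y]$ to be the factor of $F(X, Y)$ of $Y$-degree $q^{\deg P}$ singled out by the requirement $f_P(u_0, u_1) = 0$, after clearing denominators.

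What remains is the uniformity $f_P(u_{e-1}, u_e) = 0$ for every $e$, not just $e = 1$. For this I would invoke the shift morphism $X_0(P^{e+1}) \to X_0(P^2)$ that on the modular side remembers only the last three invariants $j_{e-1}, j_e, j_{e+1}$: the induced inclusion $\F(X_0(P^2)) \hookrightarrow G_{e+1}$ carries the generators $u_0$ and $u_1$ of $\F(X_0(P^2))$ to $u_{e-1}$ and $u_e$ respectively, so the identity $f_P(u_0, u_1) = 0$ in $\F(X_0(P^2))$ pulls back to $f_P(u_{e-1}, u_e) = 0$ in $G_{e+1}$. The main obstacle I foresee is precisely this uniformity step: one has to justify that the factor of $F$ chosen at the first level remains the correct minimal-polynomial factor at every subsequent level, and although this is conceptually enforced by the shift morphism, it requires a careful reconciliation between the modular interpretation and the purely algebraic description of the tower.
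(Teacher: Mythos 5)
Your argument follows the paper's own route: rationality of $X_0(P)$ via Schweizer, a uniformizer $u_0(P)$ with $j_0 = \psi(u_0(P))$ and $j_1 = \phi(u_0(P))$, the relation $\phi(u_{e-1}) = j_e = \psi(u_e)$, and extraction of the $Y$-degree-$q^{\deg P}$ factor of the cleared-denominator polynomial, justified by the degree count from (\ref{eq:extdegree}). The extra care you take --- defining $u_e$ as the pullback of $u_0(P)$ along the forgetful morphism so that the same $\psi, \phi$ apply at every level, and establishing uniformity of the factor $f_P$ via the shift morphism $X_0(P^{e+1}) \to X_0(P^2)$ --- makes explicit two points the paper asserts without proof, and both of your justifications are correct.
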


Finally, if $P$ is a polynomial of degree one, then both $X_0(P)$ and $X_0(P^2)$ are rational. In that case, there exist $u_{e-1}(P),u_e(P)$ as above and $v_{e-1}(P)$ such that $\F(u_{e-1}(P),u_{e}(P))=\F(v_{e-1}(P))$ for $e>0$. Similarly as above, there exist rational functions $\psi'(t)$ and $\phi'(t)$ such that
$u_{e-1}(P)=\psi'(v_{e-1}(P))$ and $u_e(P)=\phi'(v_{e-1}(P))$. These rational functions have degree $q^{\deg P}=q$, since $[\F(v_{e-1}(P)):\F(u_{e-1}(P))]=[\F(v_{e-1}(P)):\F(u_{e}(P))]=q.$ The function field $\F(X_0(P^n))$ with $n\ge 2$ can then recursively be defined by the equations $\psi'(v_e(P))=\phi'(v_{e-1}(P))$ for $1\le e <n-1$. The depth of the recursion is one (since the defining equation relates $v_e(P)$ to $v_{e-1}(P)$ only) and moreover, the variables can be separated in the defining equations. Since we assume $\deg P=1$, this puts a heavy restriction on the number of possibilities. In fact, without loss of generality we may assume that $P=T$. In the next section we will describe this case in detail, obtaining explicit equations describing the Drinfeld modular tower $\F(X_0(T^n))_{n\ge 2}$. In the case of classical modular curves, Elkies in \cite{elkies} gave, among others, several similar examples by considering (prime) numbers $p$ such that the genus of the classical modular curves $X_0(p)$ and $X_0(p^2)$ is zero. This is the case for $p \in \{2,3,5\}$.

The towers $(\F(X_0(P^n)))_{n\ge 0}$ are also useful for obtaining interesting towers with finite constant fields, since Gekeler showed the following:

\begin{theorem}[\cite{ge2}]\label{thm:optimal}
Given a prime $L \in \F_q[T]$, denote by $\F_L$ the finite field $\F_q[T]/(L)$. Moreover, write $\F_L^{(2)}$ for the quadratic extension of $\F_L$.  The reduction modulo any prime $L \in \F_q[T]$ not dividing $P$ of the tower $(X_0(P^n))_{n\ge 0}$ gives rise to an asymptotically optimal tower over the constant field $\F_L^{(2)}$.
\end{theorem}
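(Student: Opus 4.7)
The plan is to verify that the reduction of $(X_0(P^n))$ over $\F_L^{(2)}$ attains the Drinfeld-Vladut bound. Writing $q_L := |\F_L| = q^{\deg L}$, so that $|\F_L^{(2)}| = q_L^2$, the Drinfeld-Vladut inequality already supplies the upper bound $\limsup_n N_n/g_n \le q_L - 1$; the content of the theorem is the matching lower bound. My approach would be Ihara's supersingular strategy, transported to the Drinfeld setting.

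The key input is the Drinfeld analogue of Deuring's theorem: the $j$-invariant of every supersingular rank-two Drinfeld module in characteristic $L$ lies in $\F_L^{(2)}$. Since $\gcd(L,P) = 1$, the covering $X_0(P^n) \to X_0(1) \cong \PP^1_{\F}$ has good reduction modulo $L$, and any geometric point of the reduced curve lying above a supersingular $j$-invariant of $X_0(1)$ parametrises a supersingular Drinfeld module together with a cyclic $P^n$-level structure. Such points are all $\F_L^{(2)}$-rational, yielding
$$N_n \ge d_n \cdot s_L,$$
where $d_n := [\F(X_0(P^n)):\F(j_0)]$ is given explicitly by Equation~\eqref{eq:extdegree} and $s_L$ is the number of supersingular $j$-invariants in characteristic $L$. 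Gekeler's mass formula for supersingular Drinfeld modules then gives $s_L = (q_L - 1)/(q^2 - 1)$ up to a bounded error depending on the parity of $\deg L$.

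For the genus, one applies Riemann-Hurwitz to $X_0(P^n) \to X_0(1)$, whose ramification is concentrated over the cusp and the finitely many elliptic points of $X_0(1)$. The number of cusps of $X_0(P^n)$ grows only like $O(q^{(n-1)\deg P})$, which is negligible compared to $d_n$, so the dominant contribution yields $g_n \sim d_n/(q^2 - 1)$ as $n \to \infty$. Combining,
$$\liminf_{n\to\infty} \frac{N_n}{g_n} \;\ge\; \lim_{n\to\infty} \frac{d_n \cdot s_L}{g_n} \;=\; s_L(q^2-1) \;=\; q_L - 1,$$
matching the upper bound. The principal difficulty is the sharp genus asymptotic: the Riemann-Hurwitz computation must track the ramification at the cusp, at the elliptic points (Drinfeld modules with extra automorphisms), and in the residual tame ramification along the tower, and verify that all subleading contributions are genuinely $o(d_n)$. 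This is the technical heart of Gekeler's argument; once it is in place, the supersingular lower bound on $N_n$ immediately yields optimality.
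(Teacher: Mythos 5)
The paper does not prove this theorem: it is stated with a citation to Gekeler \cite{ge2} and used as a black box, so there is no in-paper argument to compare against. Your sketch does, however, reconstruct the strategy of Gekeler's cited proof: Drinfeld--Vladut for the upper bound, $\F_L^{(2)}$-rational supersingular points together with the mass formula for the lower bound on $N_n$, and Riemann--Hurwitz with ramification confined to the cusp and the elliptic points for the genus asymptotic $g_n \sim d_n/(q^2-1)$. The two places you (rightly) flag as the technical heart are exactly where the substance lies --- in particular, the claim that every point of $X_0(P^n)$ above a supersingular $j$-invariant is $\F_L^{(2)}$-rational is not automatic; it rests on the fact that the $\F_L^{(2)}$-Frobenius acts as a central scalar on a supersingular rank-two Drinfeld module and hence fixes every finite $\F_q[T]$-submodule, which is what makes all cyclic $P^n$-level structures descend. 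Granting that and the genus bookkeeping, the outline is correct and faithful to the source.
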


The above theorem implies that the tower found in \cite{elkiesd}, being the reduction of $(X_0(T^n))_{n\ge 0}$ modulo $T-1$, is asymptotically optimal over the constant field $\F_{T-1}^{(2)}=\mathbb{F}_{q^2}.$ Now we will give several examples. Sometimes we do not give all details, since this would fill many pages. Several computations were carried out using the computer algebra package MAGMA \cite{magma}. For example all Drinfeld modular polynomials below were calculated using MAGMA. On occasion, we will perform all calculations sketched above for a reduced version of the tower $(\F(X_0(P^n)))_{n\ge 0}$, since the resulting formulas are usually much more compact after reduction. In all examples in this section, it is assumed that $q=2$, while $P$ will be a polynomial of degree one or two.

\begin{example}[$P=T, q=2$]
By \cite{schweizer}, the Drinfeld modular polynomial of level $T$ in case $q=2$ is given by
\begin{equation*}
\begin{split}
\Phi_{T}(X,Y)  & = X^3+Y^3+T(T+1)^3(X^2+Y^2)+T^2(T+1)^6(X+Y)\\
 &+T^3(T+1)^9+X^2Y^2+(T+1)^3(T^2+T+1)XY+T(X^2Y+XY^2).
\end{split}
\end{equation*}
The polynomial $\Psi_T(X,Y,Z)$ can readily be found using Remark \ref{rem:casePprime}:
\begin{equation*}
\begin{split}
\Psi_{T}(X,Y,Z)
& =  Z^2 + (X + (Y^2 + TY + T(T+1)^3))Z + X^2\\
& + (Y^2 + TY + T(T+1)^3)X + TY^2\\
& + (T^2+T+1)(T+1)^3Y + T^2(T+1)^6\\
\end{split}
\end{equation*}
Using Proposition \ref{prop:depth two}, we can in principle now describe the tower of function fields of the modular curves $(X_0(T^n))_{n \ge 0}$. However, we can use Proposition \ref{prop:depth one} to find a recursive description of depth one. First we need a uniformizing element $u_0$ of $\F(j_0,j_1)$.
Using a computer, one finds
$$u_0=\frac{T^3(T^2j_0+T^2+T^4+T^6+1+Tj_1+T^2j_1+Tj_0+j_0j_1)}{(T^3+j_1^2+T^2+j_0+Tj_1+T^3j_0+T^7+T^4j_1+T^6}.$$
Expressing $j_0$ and $j_1$ turns out to give a more compact formula.

$$j_0=\dfrac{(u_0+T)^3}{u_0} \makebox{ and } j_1=\dfrac{(u_0+T^2)^3}{u_0^2}.$$

This means that the variables $u_0$ and $u_1$ satisfy the equation:
$$\dfrac{(u_0+T^2)^3}{u_0^2}=\dfrac{(u_1+T)^3}{u_1}.$$
However, this is not an equation of minimal degree. As explained before Proposition \ref{prop:depth one}, we can find an equation of degree (in this case) two by factoring:
$$(X+T^2)^3Y+(Y+T)^3X^2=(XY+T^3)(X^2+XY^2+XYT+YT^3).$$ We find that $f_T(X,Y)=X^2+XY^2+XYT+YT^3$. This polynomial recursively defines the tower of function fields of the modular curves $(X_0(T^n))_{n \ge 1}$ as in Proposition \ref{prop:depth one}.
\end{example}

\begin{example}[$P=T^2+T+1, q=2$]
The Drinfeld modular polynomial of level $T^2+T+1$ is given by
\begin{equation*}
\footnotesize
\begin{split}
\Phi_{T^2+T+1}(X,Y)  & = X^5+Y^5 + X^4Y^4 + (T^2 + T + 1)(X^4Y^2+X^2Y^4)\\
& + (T^2 + T + 1)(X^4Y+XY^4)\\
& + T^3(T+1)^3(T^2+T+1)(X^4+Y^4)  \\
& + T^2(T+1)^2(T^2+ T + 1)X^3Y^3 \\
& + (T^2+T)(T^2+T+1)(T^3+T+1)(T^3+T^2+1)(X^3Y^2+X^2Y^3) \\
& + T^3(T+1)^3(T^2+T+1)(X^3Y+XY^3)\\
& + T^6(T+1)^6(T^2+T+1)^2(X^3+Y^3)\\
& + T^5(T+1)^5(T^2+T+1)(T^4+T+1)X^2Y^2\\
& + T^6(T+1)^6(T^2+T+1)(T^4+T+1)(X^2Y+XY^2)\\
& + T^9(T+1)^9(T^2+T+1)^3(X^2+Y^2) + T^{11}(T+1)^{11}X Y
\end{split}
\end{equation*}
As in the previous example one can use Remark \ref{rem:casePprime}, to find the trivariate polynomial $\Psi_{T^2+T+1}(X,Y,Z)$. Finding a uniformizing element $u_0$ of $\F(X_0(T^2+T+1))$ is somewhat more elaborate. Since such a uniformizing element fills several pages, it is omitted. Below we will state the reduction of $u_0$ modulo $T$ and $T+1$, so the reader can get an impression of its form. Once $u_0$ is found, $j_0$ and $j_1$ can be expressed in terms of it. In this case we find:
$$j_0=\dfrac{(u_0+1)^3(u_0^2+u_0+T^2+T+1)}{u_0}$$
and
$$j_1=\dfrac{(u_0+T^2+T+1)^3(u_0^2+u_0+T^2+T+1)}{u_0^4}$$
To find the polynomial $f_{T^2+T+1}(X,Y)$, we need to factor the polynomial

\begin{equation*}
\small
\begin{split}(Y^5+(T^2+T+1)Y^3+(T^2+T+1)Y^2+(T^2+T)Y+(T^2+T+1))X^4 + & \\ Y(X^5+(T^2+T)X^4+(T^2+T+1)^2X^3+(T^2+T+1)^3X^2+(T^2+T+1)^4),
\end{split}
\end{equation*}
whose factors are $XY+T^2+T+1$ and
\begin{equation*}
\begin{split}
f_{T^2+T+1}(X,Y)& =Y^4X^3 + (T^2 + T + 1)(Y^3X^2 + Y^2X^3 + (T^2 + T + 1)Y^2X\\ & + YX^3 + (T^2 + T + 1)YX^2 + (T^2 + T + 1)^2Y) + X^4
\end{split}
\end{equation*}
The polynomial $f_{T^2+T+1}(X,Y)$ recursively defines the tower of function fields of the modular curves $(X_0((T^2+T+1)^n))_{n \ge 1}$ as in Proposition \ref{prop:depth one}.

We consider the reduction modulo $T$ or $T+1$ of this tower, which by Theorem \ref{thm:optimal} gives an optimal tower over $\mathbb{F}_4$. While a uniformizing element of $\F(X_0(T^2+T+1))$ was too long to be stated, over $\mathbb{F}_4(X_0(T^2+T+1))$ it is given by
$$
u_0:=\dfrac{j_0^4j_1^3 + j_0^4j_1^2 + j_0^4j_1 + j_0^4 + j_0^3j_1^7 + j_0^3j_1^6 + j_0^3j_1^4 + j_0^2j_1^5
    + j_0 j_1^5 + j_0 j_1^4 + j_1^6 + j_1^4}{j_1^8}
$$
Reducing the above found polynomial $f_{T^2+T+1}(X,Y)$ modulo $T$ or $T+1$, we now explicitly find that the polynomial
$$Y^4X^3 + Y^3X^2 + Y^2X^3 + Y^2X + YX^3+ YX^2 + Y + X^4$$
recursively defines an optimal tower over $\F_4$.
\end{example}

\begin{example}[$P=T^2+T, q=2$]
In the previous examples, the polynomial $P$ was a prime, but in this example we will consider the composite polynomial $P=T^2+T$. The Drinfeld modular polynomial of level $T^2+T$ has $Y$-degree $9$ by Equation \ref{eq:extdegree}. Using a computer, one finds:
\begin{equation*}
\footnotesize
\begin{split}
\Phi_{T^2+T}(X,Y)
& =  X^9 +Y^9+ (X^8Y^4+X^4Y^8) + (T^2 + T + 1)(X^8Y^2+X^2Y^8) \\
&\hspace{-2.2cm} + (T^2 + T)(X^8Y+XY^8) + (T^6+T^5+T^3+T^2+1)(T^2+T)(X^8+Y^8)\\
&\hspace{-2.2cm} + (X^7Y^4+X^4Y^7)+ (T^2 + T)^3(X^7Y^3+X^3Y^7)\\
&\hspace{-2.2cm} + (T^5+T^4+T^3+T+1)(T^5+T^3+T^2+T+1)(T^2+T)^3(X^7+Y^7)\\
&\hspace{-2.2cm} + (X^6Y^5+X^5Y^6)+ (X^6Y^4+X^4Y^6) + (T^2+T+1)^5(X^6Y^3+X^3Y^6)\\
&\hspace{-2.2cm} + (T^7+T^6+T^5+T^4+T^2+T+1)(T^7+T^3+T^2+T+1)(T^2+T)(X^6Y^2+X^2Y^6)\\
&\hspace{-2.2cm} + (T^{14}+T^{13}+T^{11}+T^{10}+T^7+T^5+T^4+T^2+1)(T^2+T)^2(X^6Y+XY^6)\\
&\hspace{-2.2cm} + (T^4+T+1)(T^2+T+1)(T^2+T)^5(T^8+T^6+T^5+T^4+T^3+T+1)(X^6+Y^6)\\
&\hspace{-2.2cm} + X^5Y^5 + (T^2+T+1)(T^2+T)^2(X^5Y^4+X^4Y^5) + (T^2 + T)^2(X^5Y^3 +X^3Y^5)\\
&\hspace{-2.2cm} + (T^9+T^8+T^7+T^5+1)(T^9+T^7+T^6+T^3+T^2+T+1)(X^5Y^2+X^2Y^5)\\
&\hspace{-2.2cm} + (T^6+T^5+T^2+T+1)(T^6+T^5+1)(T^2+T+1)^3(T^2+T)^2(X^5Y+XY^5)\\
&\hspace{-2.2cm} + (T^5+T^3+T^2+T+1)(T^5+T^4+T^3+T+1)(T^2+T+1)(T^2+T)^5(X^5+Y^5)\\
&\hspace{-2.2cm} + (T^{18}+T^{17}+T^{16}+T^{10}+T^9+T^4+T^2+T+1)(T^2+T+1)^2(T^2+T)(X^4Y^2+X^2Y^4)\\
&\hspace{-2.2cm} + (T^2+T+1)^2(T^2+T)^7(X^4Y+XY^4)+ (T^2+T)^8(T^6+T^5+T^3+T^2+1)(X^4+Y^4)\\
&\hspace{-2.2cm} + (T^{10}+T^9+T^8+T^6+T^5+T+1)(T^2+T+1)^3X^3Y^3+(T^8+T^7+T^2+T+1)\\
&\hspace{-2.2cm} \cdot (T^8+T^7+T^6+T^5+T^4+T^3+1)(T^2+T+1)(T^2+T)^2(X^3Y^2+X^2Y^3)\\
&\hspace{-2.2cm} + (T^2+T+1)(T^2+T)^4(T^{10}+T^9+T^8+T^3+T^2+T+1)(X^3Y+XY^3)\\
&\hspace{-2.2cm} + (T^4+T+1)(T^3+T+1)(T^3+T^2+1)(T^2+T+1)^3(T^2+T)^3X^2Y^2\\
&\hspace{-2.2cm} + (T^2+T)^{10}(X^2Y+XY^2) + (T^2+T)^{10}(X^2+Y^2)+ (T^4+T+1)(T^2+T)^7(X^3+Y^3)\\
&\hspace{-2.2cm} + (T^3+T+1)(T^3+T^2+1)(T^2+T)^6XY + (T^2+T+1)(T^2+T)^8(X+Y)+ (T^2+T)^9\\
\end{split}
\end{equation*}
Finding a uniformizing element $u_0$ of $\F(X_0(T^2+T))$ and expressing $j_0$ and $j_1$ in it, we find
$$j_0=\dfrac{(u_0^3 + (T^2 + T)u_0 +(T^2 + T))^3}{u_0(u_0+T)^2(u_0+T+1)^2} \makebox{ and } j_1=\dfrac{(u_0^3 + (T^2 + T)u_0^2 +(T^2 + T)^2)^3}{u_0^4(u_0+T)^2(u_0+T+1)^2}$$
To find $f_{T^2+T}(X,Y)$, we need to factor a bivariate polynomial of $Y$-degree $9$. Note that Remark \ref{rem:casePprime} does not apply, though it still predicts the existence of one factor of $Y$-degree one. The factors turn out to be
$$ XY + T^2 + T,$$
$$ Y^2X^2 + TY^2X + (T^2 + T)YX + (T^3 + T^2)Y + T^2X^2 + T^4 + T^2,$$
$$ Y^2X^2 + (T + 1)Y^2X + (T^2 + T)YX + (T^3 + T)Y + (T^2 + 1)X^2 + T^4+ T^2,$$
and
\begin{equation*}
\begin{split}
Y^4X^3 + Y^4X^2 + (T^2 + T)Y^4X + (T^2 + T)Y^3X^2 + (T^2 + T)Y^3X +(T^4 + T^2)Y^3\\
+ (T^2 + T + 1)Y^2X^3 + (T^4 + T^2)Y^2X + (T^4 + T^2)Y^2 + (T^2 + T)YX^3\\
+ (T^4 + T)YX^2 + (T^6 + T^5 + T^4 + T^3)Y + X^4.
\end{split}
\end{equation*}
The last factor is $f_{T^2+T}(X,Y)$, since it is the only factor of $Y$-degree $4$.
Considering reduction modulo $T^2+T+1$, we see by Theorem \ref{thm:optimal} that the polynomial
$$Y^4X^3 + Y^4X^2 + Y^4X + Y^3X^2 + Y^3X +Y^3 + Y^2X + Y^2 + YX^3 + Y + X^4$$ recursively defines an optimal tower over $\mathbb{F}_{16}$.
\end{example}

\section{An example of a classical modular tower}

In \cite[Section 6.1.2.3]{loetter} a good recursive tower over the field $\F_{7^4}$ is given. The recursive equation stated there is: $$y^5=\frac{x^5+5x^4+x^3+2x^2+4x}{2x^4+5x^3+2x^2+x+1}.$$ We will consider the equivalent tower obtained by replacing $x$ by $3x$ and $y$ by $3y$. The resulting equation is:
\begin{equation}\label{eq:loetter}
y^5=x \frac{x^4-3x^3+4x^2-2x+1}{x^4+2x^3+4x^2+3x+1}
\end{equation}

The proof that the corresponding recursive tower is good can be carried out by observing that there are places that split completely in the tower and by observing that the ramification locus of the tower is finite. Since all ramification is tame (the steps in the tower are Kummer extensions), the Riemann-Hurwitz genus formula can be used directly to estimate the genera of the function fields occurring in the tower. In this way one obtains that the limit of the tower is at least $6$. The splitting places of this tower are not defined over $\F_{49}$, otherwise this would be an optimal tower. We will show in this section that this tower has a modular interpretation and obtain a generalization to other characteristics as well.

Based on the extension degrees, a reasonable supposition is that there may be a relation to the function fields of the curves $X_0(5^n)_{n\ge 1}$. In \cite{elkies} Elkies found an explicit recursive description of $X_0(5^n)_{n\ge 2}$: define $P(t):=t^5+5t^3+5t-11$, then this tower satisfies the recursive equation
$$P(y)=\dfrac{125}{P\left(\frac{x+4}{x-1}\right)},$$
or equivalently
\begin{equation}\label{eq:elkies}
y^5+5y^3+5y-11=\dfrac{(x-1)^5}{x^4+x^3+6x^2+6x+11}.
\end{equation}
The steps in this tower are not Galois, but Elkies notes that the polynomial $P(X)$ is dihedral. More concretely: $P(v^{-1}-v)=-v^5-11+v^{-5}.$ Since the steps in the recursive tower from equation \eqref{eq:loetter} are Galois (note that the $5$-th roots of unity belong to the constant field), we consider the extension $\Q(v)$ of $\Q(x)$ defined by $1/v-v=x$. Direct verification using MAGMA reveals that the function field $\Q(u,y)$ contains a solution $w$ to the equation $1/w-w=y$ such that $$w^5=v(v^4-3v^3+4v^2-2v+1)/(v^4+2v^3+4v^2+3v+1).$$ Therefore we recover equation \eqref{eq:loetter}. We have shown that the tower satisfying equation \eqref{eq:loetter} recursively, is a supertower of the modular tower $X_0(5^n)_{n \ge 2}.$ One can say more however. Equation \eqref{eq:loetter} occurs in the literature of modular functions. In fact it occurs in the same form in the famous first letter that S.~Ramanujan wrote $100$ years ago to G.H.~Hardy. In it, Ramanujan defined a continued fraction, now known as the Rogers--Ramanujan continued fraction, and related two of its values by equation \eqref{eq:loetter} (see Theorem 5.5 in \cite{berndt} for more details). The Rogers--Ramanujan continued fraction can be seen as a modular function for the full modular group $\Gamma(5)$ and defines a uniformizing element of the function field $\Q(X(5))$. This means that we can obtain the recursive tower defined (over $\Q$) by equation \eqref{eq:loetter} as a lift of the tower defined by equation \eqref{eq:elkies} by extending the first function field of that tower to the function field of $X(5)$. Also by direct computation one sees that the extension $\Q(\zeta_5)(w,x)/\Q(\zeta_5)(x)$ is a Galois extension (it is in fact the Galois closure of $\Q(\zeta_5)(x,y)/\Q(\zeta_5)(x)$).

For any prime number $p$ different from $5$ the curves have good reduction, meaning that we may reduce the equations modulo such primes $p$. Extending the constant field to $\F_{q}$ with $q=p^2$ if $p \equiv \pm 1 \pmod{5}$ and $q=p^4$ if $p \equiv \pm 2 \pmod{5}$, we make sure that the primitive fifth root of unity is contained in the constant field $\F_q$. Over this constant field, the tower satisfying the recursive relation \eqref{eq:loetter} has limit at least $p-1$, i.e., the ratio of the number of rational places and the genus tends to a value larger than or equal to $p-1$ as one goes up in the tower. This means that the tower is optimal if $p \equiv \pm 1 \pmod{5}$ and good if $p \equiv \pm 2 \pmod{5}$.

\section{A tower obtained from Drinfeld modules over a different ring}

Previously we have used Drinfeld modules of rank 2 over the ring $\F_q[T]$ to construct towers of function fields. In principle, one can consider Drinfeld modules over other rings and use them to construct towers of function fields. The theory is however, much less explicit in this case. In this section, we illustrate the method of constructing towers by studying a particular example in detail. More precisely, we consider Drinfeld modules over the ring $A:=\F_2[S,T]/\langle S^2 + S - T^3 - T \rangle.$  The ring $A$ is the coordinate ring of an elliptic curve with $5$ rational points. We denote by $P$ the prime ideal of $A$ generated by (the classes of) $S$ and $T$. This prime ideal corresponds to the point $(0,0)$ of the elliptic curve. We will construct an asymptotically good tower in this setup.

\subsection{Explicit Drinfeld modules of rank 2}

Unlike in the case of Drinfeld modules over the ring $\F_q[T]$ we cannot directly compute a modular polynomial. In fact, it is non-trivial even to compute examples of Drinfeld modules $\phi$ of rank two in this setting. Our first task will be to compute all possible normalized Drinfeld modules of rank $2$ over $A$ in characteristic $P$. Such a Drinfeld module $\phi$ is specified by
\begin{equation}\label{eq:phiT}
\phi_T=\tau^4+g_1\tau^3+g_2\tau^2+g_3\tau
\end{equation}
and
\begin{equation}\label{eq:phiS}
\phi_S=\tau^6+h_1\tau^5+h_2\tau^4+h_3\tau^3+h_4\tau^2+h_5\tau.
\end{equation}
The eight parameters $g_1,\dots,h_5$ cannot be chosen independently, but should be chosen such that $\phi_{S^2+S-T^3-T}=\phi_0=0$ and $\phi_T \phi_S=\phi_S \phi_T$. The first condition comes from the defining equation of the curve, while the second one should hold, since the fact the $\phi$ is a homomorphism implies that $\phi_T \phi_S=\phi_{TS}$ and $\phi_S \phi_T=\phi_{ST}=\phi_{TS}.$ In this way one obtains the following system of polynomial equations for $g_i$ and $h_j$.
From the condition $\phi_{S^2+S-T^3-T}=0$ one obtains that the $g_i$ and $h_i$ are in the zero-set of the following polynomials:

$$
\footnotesize
\begin{array}{l}
h_5 + g_3,\\
h_4 + h_5^3 + g_2,\\
h_3 + h_4^2h_5 + h_4h_5^4 + g_1 + g_3^7,\\
h_2 + h_3^2h_5 + h_3h_5^8 + h_4^5 + g_2^4g_3^3 + g_2^2g_3^9 + g_2g_3^{12} + 1,\\
h_1 + h_2^2h_5 + h_2h_5^{16} + h_3^4h_4 + h_3h_4^8 + g_1^4g_3^3 + g_1^2g_3^{17} +
        g_1g_3^{24} + g_2^{10}g_3 + g_2^9g_3^4 + g_2^5g_3^{16},\\
h_1^2h_5 + h_1h_5^{32} + h_2^4h_4 + h_2h_4^{16} + h_3^9 + g_1^8g_2^2g_3 + g_1^8g_2g_3^4
        + g_1^4g_2g_3^{32} + g_1^2g_2^{16}g_3 + g_1g_2^{16}g_3^8 + g_1g_2^8g_3^{32}\\
+ g_2^{21}       + g_3^{48} + g_3^{33} + g_3^3 + 1,\\
h_1^4h_4 + h_1h_4^{32} + h_2^8h_3 + h_2h_3^{16} + h_5^{64} + h_5 + g_1^{18}g_3 + g_1^{17}g_3^8
        + g_1^{16}g_2^5 + g_1^9g_3^{64} + g_1^4g_2^{33} + g_1g_2^{40} + g_2^{32}g_3^{16}\\
+      g_2^{32}g_3 + g_2^{16}g_3^{64} + g_2^2g_3 + g_2g_3^{64} + g_2g_3^4,\\
h_1^8h_3 + h_1h_3^{32} + h_2^{17} + h_4^{64} + h_4 + g_1^{36}g_2 + g_1^{33}g_2^8 +
        g_1^{32}g_3^{16} + g_1^{32}g_3 + g_1^{16}g_3^{128} + g_1^9g_2^{64} + g_1^2g_3 + g_1g_3^{128}\\
        + g_1g_3^8 + g_2^{80} + g_2^{65} + g_2^5,\\
h_1^{16}h_2 + h_1h_2^{32} + h_3^{64} + h_3 + g_1^{73} + g_1^{64}g_2^{16} + g_1^{64}g_2 +
        g_1^{16}g_2^{128} + g_1^4g_2 + g_1g_2^{128} + g_1g_2^8 + g_3^{256} + g_3^{16} + g_3,\\
h_1^{33} + h_2^{64} + h_2 + g_1^{144} + g_1^{129} + g_1^9 + g_2^{256} + g_2^{16} + g_2,\\
h_1^{64} + h_1 + g_1^{256} + g_1^{16} + g_1.
\end{array}$$
Similarly, the condition $\phi_T \phi_S=\phi_S \phi_T$ gives rise to the following polynomials:
$$
\footnotesize
\begin{array}{l}
    h_5^2g_3 + h_5g_3^2,\\
    h_4^2g_3 + h_4g_3^4 + h_5^4g_2 + h_5g_2^2,\\
    h_3^2g_3 + h_3g_3^8 + h_4^4g_2 + h_4g_2^4 + h_5^8g_1 + h_5g_1^2,\\
    h_2^2g_3 + h_2g_3^{16} + h_3^4g_2 + h_3g_2^8 + h_4^8g_1 + h_4g_1^4 + h_5^{16} + h_5,\\
    h_1^2g_3 + h_1g_3^{32} + h_2^4g_2 + h_2g_2^{16} + h_3^8g_1 + h_3g_1^8 + h_4^{16} + h_4,\\
    h_1^4g_2 + h_1g_2^{32} + h_2^8g_1 + h_2g_1^{16} + h_3^{16} + h_3 + g_3^{64} + g_3,\\
    h_1^8g_1 + h_1g_1^{32} + h_2^{16} + h_2 + g_2^{64} + g_2,\\
    h_1^{16} + h_1 + g_1^{64} + g_1.
\end{array}$$
One could attempt a direct Groebner basis computation on the ideal $I \subset \F_2[g_1,\dots,h_5]$ generated by the above two sets of polynomials, but we can simplify the system of polynomial equations first. Taking for example the last of each set of polynomials, $p_1:=h_1^{64} + h_1 + g_1^{256} + g_1^{16} + g_1$ and $p_2:=h_1^{16} + h_1 + g_1^{64} + g_1$, we find that $p_3:=p_1-p_2^4=h_1^4+h_1+g_1^{16}+g_1^4+g_1$ is an element of the ideal $I$. Moreover, since $p_2=p_3+p_3^4$ and $p_1=p_3+p_3^4+p_3^{16}$, we can replace $p_1$ and $p_2$ by $p_3$ when generating the ideal $I$. Also we can eliminate the variables $h_i$ altogether, since they can be expressed in terms of $g_1,g_2,g_3$ using the first five generators of $I$. After performing these and similar simplifications, we computed a Groebner basis of the resulting polynomial ideal in the variables $g_1,g_2$ and $g_3$ using MAGMA. The resulting Groebner basis contains one irreducible (but not absolutely irreducible) polynomial involving only $g_2$ and $g_3$ as well as an irreducible polynomial of degree one in $g_1$. This means that the zero-set of the ideal $I$ can be interpreted as an irreducible algebraic curve defined over $\F_2$. It turns out to have genus $4$.

From the modular point of view, it is more natural to consider isomorphism classes of Drinfeld modules. An isomorphism between two Drinfeld modules $\phi$ and $\psi$ is given by a non-zero constant $c$ such that $c\phi=\psi c$. Considering equations \eqref{eq:phiS} and \eqref{eq:phiT}, we see that for normalized Drinfeld modules $\phi$ and $\psi$ we have that $c \in \F_{4}$ and that $g_1^3,g_2,g_3^3,h_1^3,h_2,h_3^3,h_4,h_5^3$ are invariant under isomorphism. Inspecting the Groebner basis computation performed before, we obtain a polynomial relation between $g:=g_3^3$ and $g_2$ and a way to express all other invariants in these two parameters. These polynomials are too large to state here, so we will not do so. The important fact is that we again obtain an irreducible algebraic curve defined over $\F_2$ which determines the isomorphism classes of possible rank two Drinfeld modules. This modular curve is known to have genus zero and to be irreducible, but not absolutely irreducible, see \cite{gekeler}. There it is also shown that the number of components is equal to the class number $h_E$, over which extension field these components are defined and how the Galois group of this extension acts on the components. In our case we obtain that there are $5$ components defined over $\F_{32}$ and that the Frobenius map of $\F_{32}/\F_2$ acts transitively on these five components. One such component is determined by the following relation between $g$ and $g_2$:
$$
\scriptsize
\begin{array}{l}
g_2^{13} + (\alpha^5g + \alpha^{14})g_2^{12} + (\alpha^4g^2 + \alpha^{19}g +
        \alpha^7)g_2^{11} + (\alpha^9g^3 + \alpha^{18}g^2 + \alpha^9g +
        \alpha^{21})g_2^{10}\\
+ (\alpha^{10}g^4 + \alpha^{21}g^3 + \alpha^{16}g^2 +
        \alpha^{18}g + \alpha^8)g_2^9 + (\alpha^{15}g^5 + \alpha^{29}g^4 + \alpha^{10}g^3
        + \alpha^{27}g^2 + \alpha^{25}g + \alpha^8)g_2^8\\
+ (g^6 + \alpha^{28}g^5 +
        \alpha^6g^4 + \alpha^{11}g^3 + \alpha^6g^2 + \alpha^{28}g + \alpha^9)g_2^7\\
+ (\alpha^5g^7 + \alpha^{23}g^6 + \alpha^2g^5 + \alpha^{15}g^4 + \alpha^{12}g^3
        + \alpha^4g^2 + \alpha^6g + \alpha^{25})g_2^6\\
+ (\alpha^4g^8 + \alpha^{30}g^7
        + \alpha^{18}g^6 + \alpha^3g^5 + \alpha^{15}g^4 + \alpha^{12}g^3 +
        \alpha^{23}g^2 + \alpha^{29}g + \alpha^{10})g_2^5\\
+ (\alpha^9g^9 + \alpha^{25}g^8
        + \alpha^8g^7 + \alpha g^6 + \alpha^7g^5 + \alpha^{25}g^4 + \alpha^{23}g^3 +
        \alpha^{15}g^2 + \alpha g + \alpha^{26})g_2^4\\
+ (\alpha^4g^{10} + \alpha^{27}g^9 +
        \alpha^{15}g^8 + \alpha^{11}g^7 + \alpha^5g^6 + \alpha^{26}g^5 + \alpha^{18}g^4
        + \alpha^9g^3 + \alpha^{11}g^2 + \alpha^{30}g)g_2^3\\
+ (\alpha^9g^{11} +
        \alpha^{30}g^{10} + \alpha^{10}g^9 + \alpha^{15}g^8 + \alpha^{12}g^7 + \alpha^6g^6
        + \alpha^2g^5 + \alpha^{26}g^4 + \alpha^{15}g^3 + \alpha^6g^2 + \alpha^{13}g +
        \alpha^{30})g_2^2\\
+ (\alpha^{10}g^{12} + \alpha^{16}g^{11} + \alpha^4g^{10} +
        \alpha^{12}g^9 + \alpha^{18}g^8 + \alpha^{28}g^7 + \alpha^2g^6 + \alpha^9g^5 +
        \alpha^3g^4 + \alpha^8g^3 + \alpha^{10}g^2 + \alpha^{17}g)g_2\\
+ \alpha^{15}g^{13} + \alpha^5g^{12} + \alpha^{24}g^{11} + \alpha^4g^{10} +
        \alpha^{11}g^9 + \alpha^8g^8 + \alpha^{12}g^7 + \alpha^{27}g^6 + g^5 +
        \alpha^{23}g^4 + \alpha^{19}g^3 + \alpha^8g^2\\ + \alpha^{24}g + 1,
\end{array}
$$
with $\alpha^5+\alpha^2+1=0.$

 Using this polynomial, we can define a rational function field $\F_{32}(g_2,g)$. Since it is rational, there exists a uniformizer $u \in \F_{32}(g_2,g)$ such that $\F_2(g_2,g)=\F_2(u)$. Finding such element $u$ can easily be done using MAGMA. Note that this element $u$ plays a very similar role as the element $j_0$ in Section \ref{section:two}, since it describes isomorphism classes of rank two Drinfeld modules. The only difference is that now there exist five conjugated families of isomorphism classes, whereas previously there was only one such family.

\subsection{Finding an isogeny}

 To find a tower, we need to find an isogeny from a given Drinfeld module to another. That is to say: we need to find two Drinfeld modules $\phi$ and $\psi$ both of rank two and an additive polynomial $\lambda$ such that $\lambda \phi = \psi \lambda$. We will describe the most direct approach, not using the theory of torsion points, which would give a faster way to obtain isogenies. We will find an isogeny $\lambda$ of the simplest possible form $\lambda=\tau - a$ from $\phi$ to another Drinfeld module $\psi$ specified by
\[
\psi_T:=\tau^4+l_1\tau^3+l_2\tau^2+l_3\tau
\]
and
\[
\psi_S=\tau^6 +t_1\tau^5 +t_2\tau^4 +t_3\tau^3 +t_4\tau^2 +t_5\tau.
\]
Since we can describe both $\phi$ and $\psi$ essentially using only one parameter, we can obtain a relation between these parameters and $a$. More in detail, always assuming $q=2$, we have
\begin{equation}\label{eq:T}
\lambda \phi_T = \psi_T \lambda
\end{equation} and
\begin{equation}\label{eq:S}
\lambda \phi_S = \psi_S \lambda
\end{equation}
The left hand side of equation \eqref{eq:T} is
\begin{align*}
& (\tau -a)(\tau^4+g_1\tau^3+g_2\tau^2+g_3\tau) \\
&= \tau^5 +(g_1^q-a)\tau^4 + (g_2^q-ag_1)\tau^3 + (g_3^q -ag_2)\tau^2 -ag_3\tau
\end{align*}
while the right hand one is
\begin{align*}
& (\tau^4+l_1\tau^3+l_2\tau^2+l_3\tau)(\tau -a) \\
&= \tau^5 +(l_1-a^{q^4})\tau^4 + (l_2-l_1a^{q^3})\tau^3 + (l_3 -l_2a^{q^2})\tau^2 -l_3a^q\tau.
\end{align*}
Consequently we get
\begin{equation}
\begin{cases}
g_1^q-a &= l_1-a^{q^4} \\
g_2^q-ag_1 &= l_2-l_1a^{q^3} \\
g_3^q -ag_2 &= l_3 -l_2a^{q^2} \\
-ag_3 &= -l_3a^q \label{eq:agl}
\end{cases}
\end{equation}
By substitution top down, we can eliminate variables $l_1, l_2, l_3$ and get
\begin{equation*}
(g_1a^{q^2+q+1} + g_2a^{q+1} + g_3a + a^{q^3+q^2+q+1})^q - (g_1a^{q^2+q+1} + g_2a^{q+1} + g_3a + a^{q^3+q^2+q+1})=0
\end{equation*}
or
\begin{equation}\label{eq:Tg}
a^{q^3+q^2+q+1} + g_1a^{q^2+q+1} + g_2a^{q+1} + g_3a = \gamma \in \F_q
\end{equation}
Equation \eqref{eq:Tg} can be seen as a polynomial in terms of $a, u$ and $g_3$.


Similarly, studying equation \eqref{eq:S}, we obtain
\begin{equation}
\begin{cases}
h_1^q-a &= t_1-a^{q^6} \\
h_2^q-ah_1 &= t_2-t_1a^{q^5} \\
h_3^q-ah_2 &= t_3-t_2a^{q^4} \\
h_4^q-ah_3 &= t_4-t_3a^{q^3} \\
h_5^q-ah_4 &= t_5-t_4a^{q^2} \\
-ah_5 &= -t_5a^q \label{eq:aht}
\end{cases}
\end{equation}
Also by substitution, we can eliminate variables $t_i (i=1,\dots,5)$ and obtain similarly
\begin{equation}\label{eq:Sh}
a^{q^5+q^4+q^3+q^2+q+1} + h_1a^{q^4+q^3+q^2+q+1} + h_2a^{q^3+q^2+q+1} + h_3a^{q^2+q+1} + h_4a^{q+1} + h_5a = \beta
\end{equation}
with $\beta \in \F_q$. As $h_i (i=1,\dots,5)$ can be expressed in terms of $g_1, g_2$ and $g_3$, the equation \eqref{eq:Sh} can be seen as a polynomial in $a,u$ and $g_3$ as well. Choosing $\beta=\gamma=1$ and computing the greatest common divisor of the resulting polynomials in equations \eqref{eq:Tg} and \eqref{eq:Sh} gives rise to an algebraic condition on $a$ of degree three. As an aside, note that the choice $\beta=\gamma=1$ corresponds to finding a $\langle S+1,T+1 \rangle$-isogeny. We obtain that the Drinfeld module $\psi$ can be expressed in terms of $u,g_3$ and $a$. Now recall that $l_1^3, l_2$ and $l_3^3$ can also be expressed in some $v \in \F_{32}(l_2,l_3^3)$. It turns out that $\psi$ does not correspond to a point in the same family of $\phi$, but a conjugated one. In this case we need to apply Frobenius three times to go from the family to which the isomorphism class of $\phi$ belongs, to the family to which the isomorphism class of $\psi$ belongs. Relating the parameters $u$ and $v$ we obtain that $\Phi(\alpha,u,v)=0$ with
$$\Phi(\alpha,X,Y) := (X^3+\alpha^{24}X^2+\alpha^4X+\alpha^9)Y^3+(\alpha^{17}X^3+\alpha^{29}X^2+X+\alpha^{30})Y^2$$
\begin{equation}\label{eq:vv}
 + (\alpha^{30}X^3+\alpha^{12}X^2+\alpha^{30}X+\alpha^{17})Y+(\alpha^4X^3+\alpha^{14}X^2+\alpha^{19}).
\end{equation}
As noted before, the parameter $u$ plays the same role as $j_0$ from Section \ref{section:two}. Similarly $v$ plays the same role as $j_1$ and the polynomial $\Phi(\alpha,X,Y)$ can be seen as an analogue of a Drinfeld modular polynomial $\Phi_N(X,Y)$. For completeness, let us note that whereas $N$ was a polynomial before, its role is now taken by the ideal $\langle S+1,T+1 \rangle \subset A$ which implicitly played a role in the construction of the isogeny $\lambda$.

\subsection{Obtaining a tower}

Just as for the towers from Section \ref{section:two}, we need a quadratic extension of the constant field in order to obtain many rational places. From now on we will therefore work over the field $\F_{2^{10}}$ instead of $\F_{2^{5}}$. Let $\beta \in \F_{2^{10}}$ be a primitive element, the $\alpha$'s of the polynomial \eqref{eq:vv} should be changed in terms of $\beta$ using the relation $\alpha=\beta^{33}$. We would now like to define a tower $\mathcal{F}:=(F_0 \subset F_1 \subset \cdots)$ of function fields as follows:
\begin{equation}\label{eq:towdef}
F_0:=\F_{2^{10}}(u_0) \, \makebox{ and for $n  \ge 0$ } \, F_{n+1}:=F_{n}(u_{n+1}),
\end{equation}
with $\Phi(\alpha^{8^{n}},u_{n},u_{n+1})=0.$ There are two remarks to be made. In the first place, the reason one needs to take $\alpha^{2^n}$ as argument is that in the first iteration we went from one family of rank two Drinfeld modules to another (namely the one obtained by applying Frobenius three times). In the next iteration one therefore needs to start at this family. This amounts to replacing $\alpha$ by $\alpha^8$ in equation \eqref{eq:vv}. Iteratively in the $n+1$-th step we need to replace $\alpha$ by $\alpha^{8^n}$. The second remark is that in fact the polynomial $\Phi(\alpha^{8},u_{1},T) \in F_1[T]$ is not irreducible. It has the degree one factor $(u_0 + \alpha^{25})T + (\alpha^{28}u_0 + \alpha^{27})
$ and a degree two factor. This is in perfect analogy with Proposition \ref{prop:depth two}. To define the tower more accurately, we would have to specify this degree two factor and use that to define $F_n$ if $n>1$. A direct computation reveals there is always a totally ramified place with ramification index two in the extension $F_{n+1}/F_n$ for $n>0$ and hence that the degree two factor remains irreducible. This means that all the steps in the tower, except the first one, are Artin--Schreier extensions.

A careful analysis of the extension $F_1/F_0$ reveals the following:
\begin{proposition}\label{prop:basicfunctfield}
The extension $F_1/F_0$ satisfies the following:
\begin{enumerate}
\item $[F_1:F_0]=3$,
\item The place $[u_0=\beta^{858}]$ is totally ramified, i.e., it has ramification index $3$.
\item The places $[u_0=\beta^{165}], [u_0=\beta^{368}],[u_0=\beta^{523}],$ and $[u_0=\beta^{891}]$ are completely splitting.
\item Above each of the places $[u_0=\beta^{198}], [u_0=\beta^{330}],[u_0=\beta^{528}],[u_0=\beta^{627}],$ and $[u_0=\beta^{924}]$ lie two places of $F_1$. One of these two has ramification index $2$ and different exponent $2$, the other has ramification index one.
\item The genus of $F_1$ is $4$.
\end{enumerate}
\end{proposition}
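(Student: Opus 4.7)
Since $F_0 = \F_{2^{10}}(u_0)$ is rational with $g_0 = 0$ and the defining relation $\Phi(\alpha, u_0, u_1) = 0$ is a cubic in $u_1$, the plan is: (i)~show $\Phi(\alpha, u_0, Y)$ is irreducible over $F_0$, which yields $[F_1:F_0] = 3$; (ii)~locate the places of $F_0$ that can ramify in $F_1$ via the $Y$-discriminant of $\Phi$ and its leading coefficient; (iii)~at each such place, factor the reduced cubic in $\F_{2^{10}}[Y]$ to read off the splitting or ramification pattern, with special care for the different exponents at the wild places; and (iv)~apply Riemann--Hurwitz to deduce $g_1 = 4$.

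For steps (i)--(iii) I would proceed as follows. A cubic is irreducible over a field iff it has no root in that field, and a direct computation (say in MAGMA over $\F_{2^{10}}[u_0][Y]$) shows $\Phi(\alpha, u_0, Y)$ has no linear factor; hence $[F_1:F_0] = 3$, giving (1). The candidates for ramified places are the zeros of $\mathrm{disc}_Y \Phi(\alpha, X, Y)$, the zeros of the leading coefficient $c_3(X) = X^3 + \alpha^{24}X^2 + \alpha^4 X + \alpha^9$, and the place at infinity. Computing and factoring these polynomials over $\F_{2^{10}} = \F_2(\beta)$ yields a finite list of rational candidate places, and evaluating $\Phi(\alpha, c, Y) \in \F_{2^{10}}[Y]$ at each of them gives the factorization type and hence the splitting in $F_1/F_0$: three distinct linear factors at $c \in \{\beta^{165}, \beta^{368}, \beta^{523}, \beta^{891}\}$, yielding (3); a single triple root at $c = \beta^{858}$, yielding (2); and a factorization $(Y-a)(Y-b)^2$ with $a \neq b$ at each of the five values in (4), producing the ramification pattern $(e_1, e_2) = (2, 1)$.

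The delicate step is establishing that the different exponent of the ramified place in (4) is exactly~$2$ rather than larger. Since the residue characteristic is $2$ and $e = 2$, this ramification is wild, and the Artin--Schreier normal form for a degree-$2$ extension in characteristic~$2$ gives different exponent $d = -v_P(f) + 1$ whenever the ramified component is locally modelled by $z^2 + z = f$ with $v_P(f)$ a negative odd integer. Concretely, at $P = [u_0 = c_0]$ with double root $b \in \F_{2^{10}}$, one substitutes $Y = b + Z$ into $\Phi(\alpha, u_0, Y)$, divides out the factor corresponding to the simple root, and rescales $Z$ by a suitable power of the uniformizer $\pi = u_0 - c_0$ to reach Artin--Schreier form. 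The claim is that in each of the five cases this normalization produces $v_P(f) = -1$, giving $d = 2$. The totally ramified place in (2) has $e = 3$, coprime to~$2$, so that ramification is tame with different exponent $e - 1 = 2$.

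Summing, $\deg \Diff(F_1/F_0) = 1 \cdot 2 + 5 \cdot 2 = 12$ (all contributing places are rational), and Riemann--Hurwitz gives
$$
2g_1 - 2 \;=\; [F_1:F_0]\,(2g_0 - 2) + \deg \Diff(F_1/F_0) \;=\; 3 \cdot (-2) + 12 \;=\; 6,
$$
so $g_1 = 4$, proving (5). The principal obstacle is the explicit local analysis underlying (4): a miscount of the different exponent at any of the five wild places would propagate directly into the wrong genus, so one must verify carefully that the Artin--Schreier depth at each such place is minimal ($v_P(f) = -1$) and not merely negative.
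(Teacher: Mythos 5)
Your proposal is correct and takes essentially the same route as the paper: the paper's proof is a one-line appeal to direct computation in MAGMA, and your write-up is a faithful blueprint of exactly that computation (irreducibility of the cubic, discriminant and leading coefficient to locate ramification, reduction mod each candidate place to read off the splitting type, Artin--Schreier normalization to pin down the different exponent $2$ at the five wildly ramified places, tame different $e-1=2$ at the triply ramified place, then Riemann--Hurwitz). One small point worth making explicit in your step (iii): you must also check the place at infinity of $F_0$ and the zeros of the leading coefficient $c_3(u_0)$ and confirm they contribute nothing to the different, which your Riemann--Hurwitz tally $1\cdot 2 + 5\cdot 2 = 12$ silently assumes.
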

\begin{proof}
All this follows by a direct computation, for example using MAGMA.
\end{proof}
The place mentioned, though ramified in the first extension turns out to split completely in all subsequent extensions. More precisely, denote by $P$ the place of $F_1$ lying above $[u_0=\beta^{858}]$. Then one can show that $P$ splits completely in any of the extensions $F_n/F_1$ for $n>1$. Using the recursive structure of the tower $\mathcal F$, it is not hard to show this. Combining this with part (iii) of the above proposition, this yields the following:
\begin{lemma}\label{lem:splitting}
Let $n>0$. The number of rational places of $F_n$ is at least $13\cdot 2^{n-1}$.
\end{lemma}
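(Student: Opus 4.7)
The plan is to first establish the bound at level $n=1$ directly from Proposition \ref{prop:basicfunctfield}, and then to propagate it upward by showing that each of the thirteen rational places of $F_1$ splits completely in every subsequent Artin--Schreier step. Given the setup in the excerpt, this is essentially a verification using the recursive structure of $\mathcal F$.

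For $n=1$ I would argue as follows. Part (iii) of Proposition \ref{prop:basicfunctfield} provides four rational places of $F_0$ that split completely in $F_1/F_0$, contributing $4\cdot 3 = 12$ rational places of $F_1$. Part (ii) provides one additional rational place $P$ of $F_1$: the totally ramified place above $[u_0=\beta^{858}]$, which has ramification index $3$ and therefore necessarily residue degree one in the degree-three extension $F_1/F_0$. Together these give $13$ rational places of $F_1$, matching $13\cdot 2^{0}$.

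For $n>1$, recall that each $F_{n+1}/F_n$ is Artin--Schreier of degree $2$, so $[F_n:F_1]=2^{n-1}$. It is therefore enough to show that each of these $13$ rational places of $F_1$ splits completely in $F_n/F_1$. For the place $P$ this is precisely what the text asserts just above the lemma. For the remaining $12$ places I would proceed by constructing a cyclically closed ``splitting set'' $V=V_0\sqcup V_1\sqcup\cdots\sqcup V_4$ of elements of $\F_{2^{10}}$: here $V_k$ is to contain the $u_k$-values arising at the rational places lying above the $13$ distinguished places of $F_1$, and the recursion $\Phi(\alpha^{8^{n}},u_n,u_{n+1})=0$ should send $V_n$ into $V_{n+1}$ (indices mod $5$, since $8$ has order $5$ in $(\Z/31\Z)^*$ while $\alpha$ has order $31$). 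The closure property I would require is that for every $v\in V_k$ the Artin--Schreier factor of $\Phi(\alpha^{8^{k}},v,T)$ of degree two splits into two $\F_{2^{10}}$-linear factors whose roots again lie in $V_{k+1}$.

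The main obstacle is the verification of this closure, which reduces to a finite but nontrivial explicit factorisation over $\F_{2^{10}}$. I would carry it out using MAGMA, as the authors do elsewhere in the paper: compute the $u_1$-values at the $13$ rational places of $F_1$ explicitly by factoring $\Phi(\alpha,\beta^{i},T)$ for $i\in\{165,368,523,858,891\}$ over $\F_{2^{10}}$, assemble the candidate sets $V_0,\ldots,V_4$, and then confirm the required factorisations at each of the five cyclic stages. Once the closure is established, a straightforward induction on $n$ shows that all $13$ rational places of $F_1$ lift to $2^{n-1}$ rational places each in $F_n$, giving the asserted lower bound $13\cdot 2^{n-1}$.
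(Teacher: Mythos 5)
Your proposal is correct and follows essentially the same route as the paper: count the $13$ rational places of $F_1$ from parts (ii) and (iii) of Proposition~\ref{prop:basicfunctfield}, then argue that each one splits completely in every further Artin--Schreier step so that $[F_n:F_1]=2^{n-1}$ copies survive. The paper only sketches the splitting argument, noting that $P$ (above $[u_0=\beta^{858}]$) can be shown to split using the recursive structure and then "combining with part (iii)"; your cyclically closed splitting set indexed modulo $5$ (reflecting the period of $\alpha^{8^n}$) makes explicit the verification the authors leave to MAGMA, and in particular correctly flags that the $12$ places coming from the completely splitting places of $F_0$ also need to be shown to continue splitting beyond $F_1$, a point the paper glosses over. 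One small bookkeeping remark: the step $F_1/F_0$ uses the full cubic $\Phi(\alpha,u_0,T)$ while all later steps use only the quadratic Artin--Schreier factor, so the closure condition you impose should be phrased for the sets $V_1,\dots,V_5$ (with $V_{k+5}=V_k$) rather than including $V_0$ in the cycle; with that adjustment your plan is exactly what is needed.
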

Also the genus of the function fields in the tower $\mathcal F$ can be estimated. Recall that $F_{n+1}/F_n$ is an Artin--Schreier extension if $n>0$. Using the recursive nature of the tower and either direct computation or a computer program like MAGMA, one can show that all ramification in the extension $F_2/F_1$ is $2$-bounded, that is that for any place $P$ of $F_1$ and any place $Q$ of $F_2$ lying above $F_1$, we have $d(Q|P)=2e(Q|P)-2$. The same is true for the extension $F_2/\F_{2^{10}}(u_1,u_2)$. By \cite[Lemma 1]{GSartin} and the recursive definition of the tower, this means that for any $n>1$, the ramification in the extension $F_n/F_1$ is $2$-bounded. By part (iv) of Proposition \ref{prop:basicfunctfield}, there are exactly $10$ places of $F_1$ that may ramify in $F_n/F_1$. Using Riemann--Hurwitz and the $2$-boundedness of the ramification, we obtain for any $n>1$ that
\begin{align*}
2g(F_n) -2 &= 2^{n-1}(2\cdot 4  - 2) +\deg \mathrm{Diff}(F_n/F_1) \\
  &\leq 2^{n-1}6 + 10\cdot 2\cdot 2^{n-1}.
\end{align*}
Hence we obtain the following:
\begin{lemma}\label{lem:genus}
For $n>1$ we have $g(F_n) \leq 13\cdot 2^{n-1} + 1.$
\end{lemma}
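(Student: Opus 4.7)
The plan is to apply the Riemann--Hurwitz genus formula to the extension $F_n/F_1$, using the ingredients already assembled in Proposition \ref{prop:basicfunctfield} and the $2$-boundedness discussion preceding the lemma.

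First I would record the two size parameters: since each step $F_{k+1}/F_k$ is an Artin--Schreier extension of degree $2$ for $k \geq 1$, the tower from $F_1$ to $F_n$ has degree $[F_n:F_1] = 2^{n-1}$; and part (v) of Proposition \ref{prop:basicfunctfield} gives $g(F_1) = 4$. Riemann--Hurwitz then reads
$$2g(F_n) - 2 = [F_n:F_1]\bigl(2g(F_1)-2\bigr) + \deg \mathrm{Diff}(F_n/F_1) = 6\cdot 2^{n-1} + \deg \mathrm{Diff}(F_n/F_1),$$
so the task is reduced to bounding the different.

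Next I would pin down the possible ramified places. By part (iv) of Proposition \ref{prop:basicfunctfield} the ramification locus of $F_2/F_1$ consists of the ten rational places of $F_1$ sitting above $[u_0=\beta^{198}]$, $[u_0=\beta^{330}]$, $[u_0=\beta^{528}]$, $[u_0=\beta^{627}]$ and $[u_0=\beta^{924}]$; the recursive structure of $\mathcal F$ together with the observation already made in the text (via \cite{GSartin}) ensures that no other places of $F_1$ can ramify higher up and that the $2$-boundedness $d(Q|P) \leq 2(e(Q|P)-1)$ passes from the single step $F_2/F_1$ to the full extension $F_n/F_1$. For each such rational place $P$ of $F_1$ this yields
$$\sum_{Q \mid P} d(Q|P)\deg Q \;\leq\; 2\sum_{Q\mid P} e(Q|P)\deg Q \;=\; 2[F_n:F_1]\deg P \;=\; 2\cdot 2^{n-1}.$$
Summing over the ten candidate places gives $\deg \mathrm{Diff}(F_n/F_1) \leq 20 \cdot 2^{n-1}$, and plugging back into Riemann--Hurwitz produces $2g(F_n) - 2 \leq 26 \cdot 2^{n-1}$, that is, $g(F_n) \leq 13\cdot 2^{n-1} + 1$.

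The only non-routine point is the transfer of $2$-boundedness and of the ramification locus from the base step $F_2/F_1$ to arbitrary height $n$; but this is exactly what the cited lemma of Garcia--Stichtenoth delivers once combined with the recursive definition \eqref{eq:towdef}. Once those two facts are granted, the remainder of the argument is bookkeeping with Riemann--Hurwitz.
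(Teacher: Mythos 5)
Your argument reproduces the paper's own proof step for step: the Riemann--Hurwitz computation over $F_1$ with $g(F_1)=4$ and $[F_n:F_1]=2^{n-1}$, the identification of the ten places of $F_1$ above $[u_0=\beta^{198}],\dots,[u_0=\beta^{924}]$ as the only possible ramification locus, the transfer of $2$-boundedness from $F_2/F_1$ to $F_n/F_1$ via the Garcia--Stichtenoth lemma and the recursive definition of the tower, and the resulting bound $\deg\mathrm{Diff}(F_n/F_1)\le 20\cdot 2^{n-1}$. This is exactly the paper's argument, so no further comment is needed.
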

This shows that the tower $\mathcal F$ is good. More precisely, we obtain from Lemmas \ref{lem:splitting} and \ref{lem:genus} that:
$$\lambda(\mathcal F) \ge 1.$$ In other words, the tower defined by equation \eqref{eq:towdef} is asymptotically good.

\bibliographystyle{99}

\vspace{2cm}

\noindent Alp Bassa\\
Sabanc{\i} University\\
{\rm 34956} Tuzla, \.Istanbul, Turkey\\
bassa@sabanciuniv.edu\\

\noindent Peter Beelen\\
Technical University of Denmark\\
Matematiktorvet, Building 303B\\
DK-2800, Lyngby, Denmark\\
pabe@dtu.dk\\

\noindent Nhut Nguyen\\
Technical University of Denmark\\
Matematiktorvet, Building 303B\\
DK-2800, Lyngby, Denmark\\
nhngu@dtu.dk\\
\end{document}